\definecolor{lightblue}{rgb}{.90,.95,1}
\newtheorem{theorem}{Theorem}[section]
\newtheorem{proposition}[theorem]{Proposition}
\theoremstyle{definition}
\theoremstyle{remark}
\newtheorem{remark}[theorem]{Remark}
\title[Remarks on the hierarchical control problems] 
      {Remarks on the hierarchical control problems with model uncertainty}
\author[Getachew K. Befekadu and Eduardo L. Pasiliao]{}
\subjclass{Primary: 35K10, 93A13, 93E20, 91B70, 91A35.}
 \keywords{Distributed control systems, hierarchical systems, model uncertainty, parabolic equations, PDEs, Stackelberg's optimization.}
 \email{gbefekadu@ufl.edu; pasiliao@eglin.af.mil}
\thanks{This work was supported in part by the Air Force Research Laboratory (AFRL)}
\begin{document}
\maketitle

\centerline{\scshape Getachew K. Befekadu}
\medskip
{\footnotesize
 \centerline{Department of Mechanical and Aerospace Engineering} 
   \centerline{University of Florida - REEF}
   \centerline{1350 N. Poquito Rd, Shalimar, FL 32579, USA}
} 

\medskip
\centerline{\scshape Eduardo L. Pasiliao}
\medskip
{\footnotesize
 \centerline{Munitions Directorate} 
   \centerline{Air Force Research Laboratory}
   \centerline{101 West Eglin Blvd, Eglin AFB, FL 32542, USA}
} 

%

\bigskip

 \centerline{(Communicated by the associate editor name)}

\begin{abstract}
In this paper, we consider a hierarchical control problem with model uncertainty. Specifically, we consider the following objectives that we would like to accomplish. The first one being of a controllability-type that consists of guaranteeing the terminal state to reach a target set starting from an initial condition; while the second one is keeping the state trajectory of the system close to a given reference trajectory over a finite time interval. We introduce the following framework. First, we partition the control subdomain into two disjoint open subdomains, with smooth boundaries, that are compatible with the strategy subspaces of the {\it leader} (which is responsible for the controllability-type criterion) and that of the {\it follower} (which is associated with the second criterion), respectively. Moreover, we account at the optimization stage for model uncertainty by allowing the {\it leader} to choose its control strategy based on a class of alternative models about the system, whereas the {\it follower} makes use of an approximate model about the system. Using the notion of Stackelberg's optimization, we provide conditions on the existence of optimal control strategies for such a hierarchical control problem, under which the {\it follower} is required to respond optimally to the strategy of the {\it leader} so as to achieve the overall objectives. Apart from the issue of modeling and uncertainty, this paper is a companion to our previous work.
\end{abstract}

\section{Introduction}  \label{S1}
Let $\Omega$ be a regular bounded open domain in $\mathbb{R}^{d}$, with smooth boundary $\Gamma$ of class $C^2$. For an open subdomain $U$ of $\Omega$, we consider a distributed control system, governed by the following partial differential equation (PDE) of parabolic type, with a control distributed over $U$, i.e.,
\begin{eqnarray}
\left.\begin{array}{l}
\dfrac{\partial y}{\partial t} + \mathcal{L}_{t,x} y = u\chi_U \quad \text{in} \quad (0, T) \times \Omega  \\
 y (0, x) = 0 \quad \text{on} \quad \Omega  \\
 y (t, x) = 0 \quad \text{for} \quad (t, x) \in \Sigma \triangleq (0, T) \times \Gamma
\end{array}\right\},   \label{Eq1}
\end{eqnarray}
where
\begin{itemize}
\item $u(t, x) \in L^2((0,\,T) \times U)$ is a control function, $\chi_U$ is a characteristic function of the subdomain $U$, and
\item $\mathcal{L}_{t,x}$ is a second-order linear operator given by
\begin{align}
 \mathcal{L}_{t,x} =  \frac{1}{2} \operatorname{tr} \Bigl \{a(t, x) D_x^2\Bigr\} + \mu(t, x) \cdot \triangledown_x,  \label{Eq2}
\end{align}
where $\mu \colon [0, \infty) \times \mathbb{R}^{d} \rightarrow \mathbb{R}^d$ is uniformly Lipschitz, with bounded first derivative; and $a \colon [0, \infty) \times \mathbb{R}^{d} \rightarrow \mathcal{S}_{d}$ is Lipschitz, with the least eigenvalue uniformly bounded away from zero, i.e., 
\begin{align*}
 a(t, x) \succeq \lambda I_{d \times d}, \quad \forall x \in \mathbb{R}^{d}, \quad \forall t \ge 0,
\end{align*}
for some $\lambda > 0$, $\triangledown_x$ and $D_x^2$ stand, respectively, for the gradient and second order derivatives with respect to the variable $x$.
\end{itemize}

In what follows, we assume without loss of generality that the distributed control function $u(t, x)$ is a known function in $L^2((0,\,T) \times U)$ (or it assumes a zero value in $\Omega$).  Note that $\mathcal{L}_{t,x}$ is uniformly elliptic in $\Omega$ for all $t \ge 0$. Then, for a regular bounded open domain $\Omega$ in $\mathbb{R}^{d}$, with smooth boundary $\Gamma$ of class $C^2$, we can represent the solutions $y(t,x)$ of the PDE in \eqref{Eq1} in terms of a stochastic differential equation (see \cite[pp.~144--150] {Fre06} for additional discussions). Later in Sections~\ref{S2} and \ref{S3}, such a stochastic representation will allow us to include at the optimization stage both a single approximate model and a class of alternative models about the system in our problem formulation.

To this end, let $W_t$ (with $W_0 = 0$) denote a $d$-dimensional standard Wiener process, then there exists a stochastic process $x_t$, $0 \le t \le T$, which is an $\mathcal{F}_t$-adapted to the Wiener process $W_t$, with respect to the pair ($\mu(t, x)$, $a(t,x)$) such that
\begin{align}
x_t = x_0 + \int_0^t \mu(s, x_s) ds + \int_0^t \sigma(s, x_s)dW_s,  \label{Eq3} 
\end{align}
where $\sigma(t, x) = a^{\frac{1}{2}}(t, x)$ is Lipschitz and $x_0$ is any point in $\mathbb{R}^{d}$.

In this paper, we assume that the {\it leader} treats the stochastic evolution equation in \eqref{Eq3} (or equivalently \eqref{Eq1}) as an approximation by taking into account a class of alternative models that are (statistically) difficult to distinguish from \eqref{Eq3}. To construct such a perturbed system model, we specifically replace $W_t$ in \eqref{Eq3} by
\begin{align}
 \widehat{W}_t + \int_0^t \sigma(s, x_s) \triangledown_x \log h(s, x_s) d s,  \label{Eq4}
\end{align}
where $\widehat{W}_t$ (with $\widehat{W}_0 = 0$) is a $d$-dimensional standard Wiener process (which is independent to $W_t$) and $h(t, x) \in C_b^{1, 2} \bigl([0, T] \times \mathbb{R}^{d}\bigr)$ is a strictly positive measurable function satisfying
\begin{align}
 \dfrac{\partial \log h(t, x)}{\partial t} & + \frac{1}{2} \operatorname{tr} \Bigl \{ a(t, x) D_x^2  \log h(t, x) \Bigr\}  + \mu(t, x) \cdot \triangledown_x \log h(t, x) \notag \\
  & \quad\quad = -\dfrac{1}{2} a(t, x) \vert \triangledown_x \log h(t, x) \vert^2 \quad \text{in} \quad [0, T) \times \mathbb{R}^{d}. \label{Eq5}
\end{align}

Note that the second-order linear operator corresponding to the perturbed system is given by
\begin{align}
 \mathcal{L}_{t,x}^h & =  \frac{1}{2} \operatorname{tr} \Bigl \{ a(t, x) D_x^2 \Bigr\} + \Bigl(\mu(t, x) + a(t, x) \triangledown_x \log h(t, x) \Bigr) \cdot \triangledown_x \notag \\
   &\equiv \mathcal{L}_{t,x} + a(t, x) \triangledown_x \log h(t, x) \cdot \triangledown_x. \label{Eq6}
\end{align}
Moreover, the stochastic process $x_t^h$ for $0 \le t \le T$, which is associated with the perturbed system, satisfies the following stochastic representation
\begin{align}
x_t^h = x_0 + \int_0^t \bigl(\mu(s, x_s^h) + a(s, x_s^h) \triangledown_x \log h(s, x_s^h) \bigr) ds + \int_0^t \sigma(s, x_s^h)d\widehat{W}_s,  \label{Eq7} 
\end{align}
where the drift perturbation $\bigl\{a(t, x) \triangledown_x \log h(t, x) \bigr\}_{0 \le t \le T}$ is used as device to parametrize the class of alternative models about the system in \eqref{Eq3}. Here, we use such a class of alternative models in our problem formulation, where the {\it leader} uses this class of alternative models for computing its optimal control strategies and whereas the {\it follower} uses a single approximate model about the system (see also Remark~\ref{R3} in Subsection~\ref{S3.2}).

\begin{remark}  \label{R1}
Note that the function $h(t, x) \in C_b^{1, 2} \bigl([0, T] \times \mathbb{R}^{d}\bigr)$ satisfies ${\partial} h(t, x)/{\partial t} + \mathcal{L}_{t, x} h(t, x) = 0$ in $[0, T) \times \mathbb{R}^{d}$ (i.e., it is the kernel of the operator $(\partial/\partial t + \mathcal{L}_{t,x})$). Moreover, in a different context, such a function also defines, in the sense of Doob, an {\it $h$-path process} and further the process $h(t, x_t)/h(0, x_0)$, $0 \le t < T$, is martingale with respect to the natural filtration $\mathcal{F}_t$ (e.g., see \cite{Jam75} for additional discussions).
\end{remark}

\begin{remark}  \label{R2}
Here, we remark that the measurable function $\sigma(t, x) \triangledown_x \log h(t, x)$ parametrizes absolutely continuous changes of measure to the stochastic representation in \eqref{Eq3} (e.g., see \cite{Gir60} for additional discussions on transforming stochastic processes). Such changes of measure can be used to specify model uncertainty in terms of relative entropy as a single constraint on the entire path of perturbation (e.g., see \cite{PetJD00} or \cite{DaiMJ96} for additional discussions).  
\end{remark}

Before concluding this section it is worth mentioning that some studies on the controllability of systems, governed by parabolic equations, have been reported in literature (to mention a few, e.g., see \cite{Lio94} in the context of Stackelberg optimization; and \cite{AraFS15} and \cite{GuiMR13} in the context of Stackelberg-Nash controllability-type problem). Recently, apart from the issue of modeling and uncertainty, the authors in \cite{BefP15} (which is a companion to the present work) have also provided some results pertaining to a chain of distributed control systems. Note that the rationale behind our framework follows in some sense the settings of these papers, where we provide a mathematical framework that addresses the problem of optimal distributed control, in the context of hierarchical control argument, with model uncertainty.

The remainder of this paper is organized as follows. In Section~\ref{S2}, using the basic remarks made in Section~\ref{S1}, we state the hierarchal control problem considered in this paper. Section~\ref{S3} presents our main results -- where we introduce a hierarchical optimization framework, which takes into account both a single approximate model and a class of alternative models about the system, under which the {\it follower} is required to respond optimally to the strategy of the {\it leader} so as to achieve the overall objectives. Moreover, this section also contains results on the controllability-type problem for such a hierarchal control problem.

\section{Problem Formulation} \label{S2}
We consider a hierarchical control problem, which takes into account model uncertainty at the optimization stage, with the following objectives. The first one being of a controllability-type that consists of guaranteeing the terminal state to reach a target set starting from an initial condition; while the second one is keeping the state trajectory of the overall system close to a given reference trajectory over a finite time interval. Such a problem can be stated as follow:

{\bf Problem~(P)}: Find an optimal control strategy $u^{\ast}(t, x) \in L^2((0,\,T) \times U)$ (which is distributed over $U$) such that
\begin{enumerate} [(i)]
\item {\bf The first objective}: Suppose that we are given a target point $y^{t_{g}}$ in $L^2(\Omega)$.
\begin{itemize} []
\item Then, we would like to have 
\begin{align}
 y(T; u^{\ast}) \in y^{t_{g}} + \alpha B, \quad \alpha > 0, \label{Eq8}
\end{align}
where $y(t; u^{\ast})$ denotes the function $x \mapsto y(t, x; u^{\ast})$, $B$ is a unit ball in $L^2(\Omega)$ and $\alpha$ is an arbitrary small positive number; and, at the same time, by taking into account the class of alternative models about the system (i.e., the representation in \eqref{Eq7}).\footnote{Notice that the condition on the terminal state, i.e., $y(T; u^{\ast}) \in y^{t_{g}} + \alpha B$, is associated with a controllability-type problem with respect to an initial condition $y(0, x) = 0$ on $\Omega$ (e.g., see \cite{Lio88} for additional discussions).}
\end{itemize}
\item {\bf The second objective}: Suppose that we are given a reference trajectory $y^{r_{f}}(t,x)$ in $L^2((0,\,T) \times \Omega)$.
\begin{itemize} []
\item Then, we would like to have the state  trajectory $y(t,x; u^{\ast})$ not too far from the reference $y^{r_{f}}(t,x)$ for all $t \in (0, T)$, while taking into account a single approximate model about the system in \eqref{Eq3} (or equivalently the representation in \eqref{Eq1}).
\end{itemize}
\end{enumerate}

In order to make the above problem (i.e.,  Problem~(P)) mathematically precise, we consider the following hierarchical cost functionals:
\begin{align}
 J_1(u) & = \dfrac{1}{2}{\int\int}_{(0,\,T) \times U}  u^2 dx dt \notag \\
            & \quad \text{ s.t.} \quad  y(T; u) \in y^{t_{g}} + \alpha B, \quad \alpha > 0 \label{Eq9}
\end{align}
and
\begin{align}
 J_2(u) =  \dfrac{1}{2}{\int\int}_{(0,\,T) \times \Omega} & \bigl(y(t; u) - y^{r_{f}}(t,x) \bigr)^2 dx dt \notag \\
            & \quad \quad + \dfrac{\beta}{2}{\int\int}_{(0,\,T) \times U} u^2 dx dt, \quad \beta > 0. \label{Eq10}
\end{align}

Note that, in general, finding such an optimal strategy $u^{\ast} \in L^2((0,\,T) \times U)$ (i.e., the {\it Pareto-optimal solution}) that minimizes simultaneously the above cost functionals in \eqref{Eq9} and  \eqref{Eq10} is not an easy problem. However, in what follows, we introduce the notion of Stackelberg's optimization \cite{VonSta34}, where we specifically partition the control subdomain $U$ into two open subdomains $U_1$ and $U_2$ (with $U_1 \cap U_2 = \varnothing$; and with smooth boundaries $\partial U_1$ and $\partial U_2$ of class $C^2$) that are compatible with the strategy subspaces of the {\it leader} and that of the {\it follower}, respectively. That is,
\begin{align}
 U = U_1 \cup U_2 \,\, \text{up to a set of measurable} \,\, U, \label{Eq11}
\end{align}
where the strategy for the {\it leader} (i.e., $u_1$) is from the subspace $L^2((0,\,T) \times U_1)$ and the strategy for the {\it follower} (i.e., $u_2$) is from the subspace $L^2((0,\,T) \times U_2)$.

Note that if $\chi_{U_i}$, for $i = 1, 2$, denotes the characteristic function for $U_i$ and $u_i$ is the restriction of the distributed control $u$ to $L^2((0,\,T) \times U_i)$. Then, the PDE in \eqref{Eq1} can be rewritten as  
\begin{eqnarray}
\left.\begin{array}{l}
\dfrac{\partial y}{\partial t} + \mathcal{L}_{t,x} y = u_1\chi_{U_1} + u_2\chi_{U_2} \quad \text{in} \quad (0, T) \times \Omega  \\
 y(0, x) = 0 \quad \text{on} \quad \Omega  \\
 y(t, x) = 0 \quad \text{for} \quad (t, x) \in \Sigma 
\end{array}\right\},   \label{Eq12}
\end{eqnarray}
where $y(t, x; u) = y(t, x; (u_1, u_2))$ and $u_i \in L^2((0,\,T) \times U_i)$ for $i=1, 2$.

Here, we assume that the {\it follower} uses the above PDE in \eqref{Eq12} (i.e., the approximate model about the system) for computing its optimal control strategies. Suppose that the strategy for the {\it leader} $u_1 \in L^2((0,\,T) \times U_1)$ is given. Then, the problem of finding an optimal strategy for the {\it follower}, i.e., $u_2^{\ast} \in L^2((0,\,T) \times U_2)$, which minimizes the cost functional $J_2$ is then reduced to finding an optimal solution for
\begin{align}
\inf_{u_2 \in L^2((0,\,T) \times U_2)}  J_2(u_1, u_2) \label{Eq13}
\end{align}
such that 
\begin{align}
 u_2^{\ast} = \digamma(u_1) \label{Eq14}
\end{align}
for some unique mapping $\digamma \colon L^2((0,\,T) \times U_1) \rightarrow L^2((0,\,T) \times U_2)$. Note that if we substitute $u_2^{\ast} = \digamma(u_1)$ into \eqref{Eq12}, then the solution $y(t, x; (u_1, \digamma(u_1)))$ depends uniformly on $u_1 \in L^2((0,\,T) \times U_1)$. Moreover, the controllability-type problem in \eqref{Eq9} is then reduced to finding an optimal solution, by taking into account the class of alternative models of \eqref{Eq7} (or the alternative operator representation in \eqref{Eq6}), for the following optimization problem
\begin{align}
& \inf_{u_1 \in L^2((0,\,T) \times U_1)}  J_1(u_1) \notag \\
& \quad \text{ s.t.} \quad  y(T; (u_1, \digamma(u_1))) \in y^{t_{g}} + \alpha B. \label{Eq15}
\end{align}

In the following section, we provide a hierarchical optimization framework for solving the above problems (i.e., the optimization problems in \eqref{Eq13}, together with \eqref{Eq14} and \eqref{Eq15}), where such a framework allows us to provide conditions on the existence of optimal control strategies for such optimization problems. Note that, for a given $u_1 \in L^2((0,\,T) \times U_1)$, the optimization problem in \eqref{Eq13} has a unique solution on $L^2((0,\,T) \times U_2)$ (see Proposition~\ref{P1}). Moreover, as we will see later on (particularly in Propositions~\ref{P2} and \ref{P3}), the optimization problem in \eqref{Eq15} makes sense if $y(T; (u_1^{\ast}, \digamma(u_1^{\ast})))$ spans a dense subset of $L^2(\Omega)$, when $u_1^{\ast}$ spans the subspace $L^2((0,\,T) \times U_1)$.
 
\section{Main Results} \label{S3}
In this section, we present our main results -- where we introduce a hierarchical optimization framework, which takes into account model uncertainty, under which the {\it follower} is required to respond optimally to the strategy of the {\it leader} so as to achieve the overall objectives.

\subsection{On the optimality control system for the follower} \label{S3.1}
In this subsection, we assume that the {\it follower} uses a single approximate model about the system (i.e., the representation in \eqref{Eq12}). Suppose that, for a given {\it leader} strategy $u_1 \in L^2((0,\,T) \times U_1)$, if $u_2^{\ast} \in L^2((0,\,T) \times U_2)$, i.e., the strategy for the {\it follower}, is an optimal solution to \eqref{Eq13} (cf. \eqref{Eq10}). Then, such a solution is characterized by the following optimality condition\footnote{Notice that, in \eqref{Eq10} (see also \eqref{Eq16} and \eqref{Eq19}), the {\it follower}'s observation subspace (which is associated with $y(t, x; (u_1, \digamma(u_1)))$) is the whole subspace $L^2((0,\,T) \times \Omega)$, whereas its strategy subspace is restricted to $L^2((0,\,T) \times U_2)$.}
\begin{align}
{\int\int}_{(0,\,T) \times \Omega} \bigl(y - y^{r_{f}}\bigr)\hat{y} dx dt + \beta {\int\int}_{(0,\,T) \times U_2} u_2^{\ast} \hat{u}_2 dx dt = 0, \notag\\
  \forall \hat{u}_2 \in L^2((0,\,T) \times U_2),  \label{Eq16}
\end{align}
where $y$ and $\hat{y}$ are, respectively, unique solutions to the following PDEs
\begin{eqnarray}
\left.\begin{array}{l}
\dfrac{\partial y}{\partial t} + \mathcal{L}_{t,x} y = u_1\chi_{U_1} + u_2^{\ast} \chi_{U_2} \quad \text{in} \quad (0, T) \times \Omega  \\
 y(0, x) = 0 \quad \text{on} \quad \Omega  \\
 y(t, x) = 0 \quad \text{for} \quad (t, x) \in \Sigma
\end{array}\right\}  \label{Eq17}
\end{eqnarray}
and
\begin{eqnarray}
\left.\begin{array}{l}
\dfrac{\partial \hat{y}}{\partial t} + \mathcal{L}_{t,x} \hat{y} = u_2^{\ast} \chi_{U_2} \quad \text{in} \quad (0, T) \times \Omega  \\
 \hat{y}(0, x) = 0 \quad \text{on} \quad \Omega  \\
 \hat{y}(t, x) = 0 \quad \text{for} \quad (t, x) \in \Sigma
\end{array}\right\}.   \label{Eq18}
\end{eqnarray}
Furthermore, if we introduce an adjoint state $p$ as follow
\begin{eqnarray}
\left.\begin{array}{l}
-\dfrac{\partial p}{\partial t} + \mathcal{L}_{t,x}^{\ast} p = y - y^{r_f} \quad \text{in} \quad (0, T) \times \Omega  \\
 p(T, x) = 0 \quad \text{on} \quad \Omega  \\
 p(t, x) = 0 \quad \text{for} \quad (t, x) \in \Sigma
\end{array}\right\},   \label{Eq19}
\end{eqnarray}
where $\mathcal{L}_{t,x}^{\ast}$ is the adjoint operator of $\mathcal{L}_{t,x}$.

Then, we have the following result which characterizes the mapping $\digamma$ in \eqref{Eq14} (i.e., the optimality system for the {\it follower} with respect to an approximate model about the system).
\begin{proposition}\label{P1}
Let $u_1 \in L^2((0,\,T) \times U_1)$ be given. Suppose that the following coupled PDEs
\begin{eqnarray}
\left.\begin{array}{l}
\dfrac{\partial y}{\partial t} + \mathcal{L}_{t,x} y = u_1\chi_{U_1} - \dfrac{1}{\beta} p \chi_{U_2}, \quad \text{in} \quad (0, T) \times \Omega  \\
-\dfrac{\partial p}{\partial t} + \mathcal{L}_{t,x}^{\ast} p = y - y^{r_f} \quad \text{in} \quad (0, T) \times \Omega  \\
 y(0, x) = 0 \quad \text{on} \quad \Omega  \\
 p(T, x) = 0 \quad \text{on} \quad \Omega  \\
 y(t, x) = p(t, x) = 0 \quad \text{for} \quad (t, x) \in \Sigma\\
\end{array}\right\},   \label{Eq20}
\end{eqnarray}
admits a unique solution pair $\bigl(y(u_1), p(u_1) \bigr)$ (which also depends uniformly on $u_1 \in L^2((0,\,T) \times U_1)$). Then, the optimality system for the follower is given by
\begin{align}
\digamma(u_1) &= -\dfrac{1}{\beta} p(u_1) \chi_{U_2} \notag \\
                            &\equiv u_2^{\ast}. \label{Eq21}
\end{align}
\end{proposition}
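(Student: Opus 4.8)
The plan is to establish Proposition~\ref{P1} as a standard convex-optimization argument for the follower's subproblem, combined with a well-posedness statement for the coupled forward-backward system \eqref{Eq20}. First I would verify that, for fixed $u_1 \in L^2((0,\,T) \times U_1)$, the functional $u_2 \mapsto J_2(u_1, u_2)$ in \eqref{Eq10} is strictly convex, coercive, and lower semicontinuous on the Hilbert space $L^2((0,\,T) \times U_2)$: convexity follows because $u_2 \mapsto y(t,x;(u_1,u_2))$ is affine (by linearity of \eqref{Eq12} and uniqueness of its weak solution in the usual parabolic energy space $L^2(0,T;H_0^1(\Omega)) \cap C([0,T];L^2(\Omega))$), so the first term of $J_2$ is a composition of a convex quadratic with an affine map, and the Tikhonov term $\tfrac{\beta}{2}\|u_2\|^2$ with $\beta>0$ supplies strict convexity and coercivity. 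Hence a unique minimizer $u_2^{\ast}$ exists, which justifies the well-definedness of the mapping $\digamma$ in \eqref{Eq14}.

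Next I would derive the Euler--Lagrange characterization. Computing the Gâteaux derivative of $J_2(u_1,\cdot)$ at $u_2^{\ast}$ in an arbitrary direction $\hat u_2$ and using that the sensitivity $\hat y$ of the state with respect to $u_2$ solves \eqref{Eq18}, one obtains exactly the optimality condition \eqref{Eq16}. To eliminate $\hat y$ in favor of a term involving $\hat u_2$ only, I would introduce the adjoint state $p$ solving the backward parabolic problem \eqref{Eq19} and test it against $\hat y$: integrating by parts in $t$ and using the definition of $\mathcal{L}_{t,x}^{\ast}$ together with the homogeneous initial/terminal and boundary conditions gives
\begin{align}
{\int\int}_{(0,\,T)\times\Omega}(y-y^{r_f})\,\hat y\,dx\,dt = {\int\int}_{(0,\,T)\times U_2} p\,\hat u_2\,dx\,dt. \notag
\end{align}
Substituting this into \eqref{Eq16} yields ${\int\int}_{(0,\,T)\times U_2}\bigl(p + \beta u_2^{\ast}\bigr)\hat u_2\,dx\,dt = 0$ for all $\hat u_2 \in L^2((0,\,T)\times U_2)$, and since $\hat u_2$ is arbitrary we conclude $u_2^{\ast} = -\tfrac{1}{\beta} p\chi_{U_2}$, which is precisely \eqref{Eq21}. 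Substituting this expression for $u_2^{\ast}$ back into the state equation \eqref{Eq17} produces the coupled system \eqref{Eq20}, whose unique solvability is exactly the hypothesis of the proposition; uniqueness of $(y(u_1),p(u_1))$ then transfers to uniqueness of $\digamma(u_1)$.

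The main obstacle is the rigorous justification of the integration-by-parts (duality) identity linking $\hat y$ and $p$: one must check that the adjoint problem \eqref{Eq19} has a unique weak solution with enough regularity (say $p \in L^2(0,T;H_0^1(\Omega)) \cap C([0,T];L^2(\Omega))$, with $\partial_t p$ in the dual space) for the pairing $\langle \partial_t \hat y, p\rangle + \langle \mathcal{L}_{t,x}\hat y, p\rangle = \langle \hat y, -\partial_t p + \mathcal{L}_{t,x}^{\ast} p\rangle$ to be valid, and that the boundary and time-endpoint terms vanish. This is classical for uniformly parabolic operators with the stated Lipschitz coefficients (Lions--Magenes / Evans-type theory), and the well-posedness of \eqref{Eq12}, \eqref{Eq18}, and \eqref{Eq19} is implicitly used throughout the paper; I would simply invoke it. The remaining steps — convexity, existence of the minimizer, and the Gâteaux computation — are routine. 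Finally I would note that the asserted uniform dependence of $(y(u_1),p(u_1))$ on $u_1$ follows from the linearity of \eqref{Eq20} in $u_1$ together with an a priori energy estimate, so that $\digamma$ is in fact affine and continuous, consistent with the uniqueness claim in \eqref{Eq14}.
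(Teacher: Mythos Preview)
Your proposal is correct and follows essentially the same route as the paper: the core step in both is the duality identity obtained by testing the adjoint equation \eqref{Eq19} against the linearized state $\hat y$ and integrating by parts, which converts the optimality condition \eqref{Eq16} into $p\chi_{U_2}+\beta u_2^{\ast}=0$. The only difference is that you supply the convex-analytic justification for existence and uniqueness of the minimizer and the G\^ateaux derivation of \eqref{Eq16}, whereas the paper simply states \eqref{Eq16} as the first-order condition and proceeds directly to the integration-by-parts computation.
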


\begin{proof}
For a given $u_1 \in L^2((0,\,T) \times U_1)$, let $y$ and $p$ be the unique solutions of \eqref{Eq20}. If we multiply the second equation in \eqref{Eq20} by $\hat{y}$ and integrate by parts. Further, noting the PDEs in \eqref{Eq18} and \eqref{Eq19}, then we have the following
\begin{align}
{\int\int}_{(0,T)\times \Omega} \bigl(y - y^{r_f}\bigr) \hat{y} d xdt &= {\int\int}_{(0,T)\times \Omega} \biggl(-\dfrac{\partial p}{\partial t} + \mathcal{L}_{t,x}^{\ast} p \biggr) \hat{y} d xdt \notag \\
& = {\int\int}_{(0,T)\times \Omega} p \biggl(\dfrac{\partial \hat{y}}{\partial t} + \mathcal{L}_{t,x} \hat{y} \biggr) d xdt  \notag \\
& = {\int\int}_{(0,T)\times U_2} p u_2^{\ast} d xdt.  \label{Eq22}
\end{align}
Moreover, using the optimality condition in \eqref{Eq16} together with \eqref{Eq22}, we obtain
\begin{align}
p \chi_{U_2} + \beta u_2^{\ast} = 0, \label{Eq23}
\end{align}
which further gives an optimal strategy for the {\it follower} as
\begin{align*}
 u_2^{\ast} &= -\dfrac{1}{\beta} p \chi_{U_2} \\
  &\triangleq \digamma(u_1),
\end{align*}
where $p$ is from the unique solution set $\{p(u_1), y(u_1)\}$ of \eqref{Eq20} that depends uniformly on $u_1 \in L^2((0,\,T) \times U_1)$. This completes the proof of Proposition~\ref{P1}.
\end{proof}

\begin{remark} \label{R3}
The above proposition states that if the strategy of the {\it leader} $u_1 \in L^2((0,\,T) \times U_1)$ is given. Then, the strategy for the {\it follower} $u_2^{\ast} = \digamma(u_1)$, which is responsible for keeping the state trajectory $y(t,x; (u_1, \digamma(u_1)))$ close to the given reference trajectory $y^{r_f}(t, x)$ on the time intervals $(0, T)$, is optimal. Moreover, the uniqueness of $\digamma$ is implied by the existence of unique solution set to the coupled PDEs in \eqref{Eq20} (see also \eqref{Eq23}). Later, in Proposition~\ref{P2}, we provide an additional optimality condition on the strategy of the {\it leader}, when such a correspondence is interpreted in the context of hierarchical optimization framework with model uncertainty.   
\end{remark}

\subsection{On the optimality system for the leader with model uncertainty} \label{S3.2}
In this subsection, we provide an optimality condition on the strategy of the {\it leader} in \eqref{Eq9}, with respect to the class of alternative models about the system in \eqref{Eq7}, when the strategy for the {\it follower} satisfies the optimality condition of Proposition~\ref{P1}.

To this end, for a given $\xi \in L^2(\Omega)$, let $\varphi $ and $\vartheta$ be unique solutions to the following coupled PDEs
\begin{eqnarray}
\left.\begin{array}{l}
-\dfrac{\partial \varphi}{\partial t} + \mathcal{L}_{t,x}^{h^\ast} \varphi= \vartheta \quad \text{in} \quad (0, T) \times \Omega  \\
\dfrac{\partial \vartheta}{\partial t} + \mathcal{L}_{t,x} \vartheta = - \dfrac{1}{\beta} \varphi \chi_{U_2} \quad \text{in} \quad (0, T) \times \Omega  \\
 \vartheta(0, x) = 0 \quad \text{on} \quad \Omega  \\
 \varphi(T, x) = \xi \quad \text{on} \quad \Omega  \\
 \varphi(t, x) =  \vartheta(t, x)  = 0 \quad \text{for} \quad (t, x) \in \Sigma\\
\end{array}\right\},   \label{Eq24}
\end{eqnarray}
where $\mathcal{L}_{t,x}^{h^\ast}$ (with $\mathcal{L}_{t,x}^{h^\ast} = \mathcal{L}_{t,x}^{\ast} - a \triangledown_x \log h \cdot \triangledown_x \varphi$) is the adjoint operator of $\mathcal{L}_{t,x}^{h}$.

Next, define the following linear decompositions  
\begin{align}
y = y_0 + z \quad \text{and} \quad p = p_0 + q \label{Eq25}
\end{align}
such that $y_0$ and $p_0$ are the unique solutions to the following coupled PDEs
\begin{eqnarray}
\left.\begin{array}{l}
\dfrac{\partial y_0}{\partial t} + \mathcal{L}_{t,x} y_0 =  - \dfrac{1}{\beta} p_0 \chi_{U_2} \quad \text{in} \quad (0, T) \times \Omega  \\
-\dfrac{\partial p_0}{\partial t} + \mathcal{L}_{t,x}^{\ast} p_0 = y_0 - y^{r_f} \quad \text{in} \quad (0, T) \times \Omega  \\
 y_0(0, x) = 0 \quad \text{on} \quad \Omega  \\
 p_0(T, x) = 0 \quad \text{on} \quad \Omega  \\
 y_0(t, x) = p_0(t, x) = 0 \quad \text{for} \quad (t, x) \in \Sigma\\
\end{array}\right\}  \label{Eq26}
\end{eqnarray}
with
\begin{align}
a(t, x) \triangledown_x \log h(t, x) \cdot \triangledown_x y_0 (t, x) = 0 \quad \text{in} \quad (0, T) \times \Omega   \label{Eq27}
\end{align}
and 
\begin{align}
a(t, x) \triangledown_x \log h(t, x) \cdot \triangledown_x p_0(t, x) = 0 \quad \text{in} \quad (0, T) \times \Omega.   \label{Eq28}
\end{align}
Note that, from \eqref{Eq20}, \eqref{Eq26}--\eqref{Eq28} together with \eqref{Eq25}, it is easy to show that $z$ and $q$ are the unique solutions to the following PDE 
\begin{eqnarray}
\left.\begin{array}{l}
\dfrac{\partial z}{\partial t} + \mathcal{L}_{t,x}^{h} z = u_1^{\ast} \chi_{U_1} - \dfrac{1}{\beta} q \chi_{U_2} \quad \text{in} \quad (0, T) \times \Omega  \\
-\dfrac{\partial q}{\partial t} + \mathcal{L}_{t,x}^{h^\ast} q = z \quad \text{in} \quad (0, T) \times \Omega  \\
 z(0, x) = 0 \quad \text{on} \quad \Omega  \\
 q(T, x) = 0 \quad \text{on} \quad \Omega  \\
 z(t, x) = q(t, x)  = 0 \quad \text{for} \quad (t, x) \in \Sigma\\
\end{array}\right\},   \label{Eq29}
\end{eqnarray}
where $u_1^{\ast} \in L^2((0,\,T) \times U_1)$ is an optimal strategy for the {\it leader} which satisfies additional conditions (see below \eqref{Eq31} and \eqref{Eq32}).

In what follows, let us denote the norm in $L^2(\Omega)$ by $\Vert \cdot \Vert_{L^2(\Omega)}$ and assume that $\xi \in L^2(\Omega)$ satisfies the following
\begin{align}
\bigl(z(T),\, \xi \bigr) = 0, \quad \forall u_1 \in L^2((0,\,T) \times U_1), \label{Eq30}
\end{align}
where $(\cdot,\, \cdot)$ denotes the scalar product in $L^2(\Omega)$. 

Then, we have the following result which characterizes the optimality condition for the {\it leader} in \eqref{Eq9}, with respect to the class of alternative models about the system in \eqref{Eq7}.
\begin{proposition} \label{P2}
The optimal strategy for the leader that minimizes 
\begin{align*}
& \inf_{u_1 \in L^2((0,\,T) \times U_1)}  J_1(u_1)\\
& \quad \text{ s.t.} \quad  y(T; (u_1, \digamma(u_1))) \in y^{t_{g}} + \alpha B 
\end{align*}
is given by
\begin{align}
u_1^{\ast} = \varphi (\xi) \chi_{U_1}, \label{Eq31}
\end{align}
where $\varphi(\xi)$ is given from the unique solution set $\bigl\{y(\xi), p(\xi), \varphi(\xi), \vartheta(\xi) \bigr\}$ for the optimality system with model uncertainty
\begin{eqnarray}
\left.\begin{array}{l}
\dfrac{\partial y}{\partial t} + \mathcal{L}_{t,x}^{h} y = u_1^{\ast} \chi_{U_1} - \dfrac{1}{\beta} p \chi_{U_2} \quad \text{in} \quad (0, T) \times \Omega  \\
-\dfrac{\partial \varphi}{\partial t} + \mathcal{L}_{t,x}^{h^\ast} \varphi = \vartheta \quad \text{in} \quad (0, T) \times \Omega  \\
\dfrac{\partial \vartheta}{\partial t} + \mathcal{L}_{t,x} \vartheta = - \dfrac{1}{\beta} \varphi \chi_{U_2} \quad \text{in} \quad (0, T) \times \Omega  \\
-\dfrac{\partial p}{\partial t} + \mathcal{L}_{t,x}^{\ast} p = y - y^{r_f} \quad \text{in} \quad (0, T) \times \Omega  \\
 y(0, x) = \vartheta(0, x) = 0 \quad \text{on} \quad \Omega  \\
 p(T, x) = 0 \quad \text{on} \quad \Omega  \\
 \varphi(T, x) = \xi \quad \text{on} \quad \Omega  \\
 y(t, x) = p(t, x)=  \varphi(t, x) =  \vartheta(t, x)  = 0 \quad \text{for} \quad (t, x) \in \Sigma\\
\end{array}\right\}.   \label{Eq32}
\end{eqnarray}
Moreover, $\xi \in L^2(\Omega)$ is a unique solution to the following variational inequality\footnote{Notice that, in \eqref{Eq33}, we write $y(T;\,\xi)$ to make explicitly the fact that the solution set $\bigl\{y(\xi), p(\xi), \varphi(\xi), \vartheta(\xi) \bigr\}$ of \eqref{Eq32} depends uniformly on $\xi \in L^2(\Omega)$.}
\begin{align}
\bigl(y(T; \xi) -y^{t_g},\, \hat{\xi} - \xi\bigr) + \alpha \bigl(\Vert \hat{\xi}\Vert_{L^2(\Omega)} - \Vert \xi \Vert_{L^2(\Omega)} \bigr) \ge 0, \quad \forall \hat{\xi} \in L^2(\Omega). \label{Eq33}
\end{align}
\end{proposition}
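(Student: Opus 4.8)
The plan is to read the leader's problem \eqref{Eq15} as a minimal-$L^2$-norm (approximate) controllability problem with a convex reachability constraint, and to characterize its solution by a Lions-type duality argument in which the perturbed operator $\mathcal{L}_{t,x}^{h}$ plays the role that $\mathcal{L}_{t,x}$ plays in the classical setting. First I would exploit the linear decomposition $y = y_0 + z$, $p = p_0 + q$ of \eqref{Eq25}--\eqref{Eq29}: by \eqref{Eq26} the pair $(y_0,p_0)$ is generated by the reference trajectory $y^{r_f}$ alone and is independent of $u_1$, while by \eqref{Eq29} the pair $(z,q)$ depends linearly on the leader's control, so, using \eqref{Eq27}--\eqref{Eq28} to reconcile $\mathcal{L}_{t,x}^{h}$ with $\mathcal{L}_{t,x}$ on $y_0,p_0$, the terminal-state map $u_1 \mapsto y(T;(u_1,\digamma(u_1))) = y_0(T) + z(T;u_1)$ is affine with $u_1 \mapsto z(T;u_1)$ linear and continuous from $L^2((0,T)\times U_1)$ into $L^2(\Omega)$. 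Hence the constraint in \eqref{Eq15} becomes $\Vert z(T;u_1) - \zeta\Vert_{L^2(\Omega)} \le \alpha$ with $\zeta := y^{t_g} - y_0(T)$, a nonempty (by the density/reachability property noted after \eqref{Eq15}), closed, convex set, and since $J_1$ is strictly convex and coercive, a unique optimal leader strategy $u_1^{\ast}$ exists.

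Next I would set up the duality using the adjoint system \eqref{Eq24} with terminal datum $\varphi(T,\cdot)=\xi$ (note $(\varphi,\vartheta)$ depends on $\xi$ only). The core computation is an integration-by-parts identity obtained by multiplying the $z$-equation in \eqref{Eq29} by $\varphi$ and the $q$-equation by $\vartheta$, integrating over $(0,T)\times\Omega$, transferring the time derivative and the elliptic part via the boundary and initial/terminal conditions and the fact that $\mathcal{L}_{t,x}^{h^\ast}$ is adjoint to $\mathcal{L}_{t,x}^{h}$; this mirrors exactly the computation \eqref{Eq22} used for Proposition~\ref{P1}. It yields the pairing relation $\bigl(z(T;u_1),\,\xi\bigr) = {\int\int}_{(0,T)\times U_1} u_1\,\varphi(\xi)\,dx\,dt$ for every $u_1$, the boundary/terminal remainders being absorbed by \eqref{Eq30}. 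Writing the first-order (Karush--Kuhn--Tucker) condition for the constrained minimization of $J_1$, with $\xi$ the Lagrange multiplier attached to the reachability constraint, then forces $u_1^{\ast}$ to be a scalar multiple of $\varphi(\xi)\chi_{U_1}$; after the scaling is absorbed into $\xi$ this is \eqref{Eq31}, and coupling \eqref{Eq29} (with $u_1^{\ast}$ substituted) to \eqref{Eq24} produces the single optimality system \eqref{Eq32}, whose unique solvability follows from that of \eqref{Eq29} and \eqref{Eq24}.

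It remains to pin down $\xi$. Substituting $u_1^{\ast}=\varphi(\xi)\chi_{U_1}$ back, the only free unknown is $\xi\in L^2(\Omega)$, and I would characterize it as the minimizer over $L^2(\Omega)$ of the Lions dual functional
\[
 \Phi(\eta) = \tfrac12\bigl(\Lambda\eta,\,\eta\bigr) + \alpha\Vert\eta\Vert_{L^2(\Omega)} - \bigl(y^{t_g} - y_0(T),\,\eta\bigr),
\]
where $\Lambda\colon \eta \mapsto z(T;\varphi(\eta)\chi_{U_1})$ is the symmetric, nonnegative ``HUM'' operator (symmetry and nonnegativity are read off the same pairing identity, since $(\Lambda\eta,\eta) = \Vert\varphi(\eta)\chi_{U_1}\Vert_{L^2}^2$). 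The function $\Phi$ is convex and lower semicontinuous, and its Euler--Lagrange inequality $\bigl(\Lambda\xi - (y^{t_g}-y_0(T)),\,\hat\xi-\xi\bigr) + \alpha\bigl(\Vert\hat\xi\Vert_{L^2(\Omega)} - \Vert\xi\Vert_{L^2(\Omega)}\bigr) \ge 0$ for all $\hat\xi$, together with $\Lambda\xi = z(T;\xi)$ and $y(T;\xi) = y_0(T) + z(T;\xi)$, is exactly \eqref{Eq33}. Uniqueness of $\xi$ follows because $\Phi$ is strictly convex and coercive — coercivity being immediate when $\alpha > \Vert y^{t_g}-y_0(T)\Vert_{L^2(\Omega)}$ and obtained by the standard Lions argument otherwise, and strict convexity requiring strict positivity of $\Lambda$.

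The main obstacle is precisely this last point, together with the constraint qualification: one must (i) verify a Slater-type condition ensuring that the multiplier $\xi$ genuinely exists — this is where the density of $\{y(T;(u_1^{\ast},\digamma(u_1^{\ast})))\}$ in $L^2(\Omega)$, noted after \eqref{Eq15}, enters — and (ii) establish a unique continuation property for the coupled adjoint system \eqref{Eq24} carrying the perturbed operator $\mathcal{L}_{t,x}^{h}$, which is what yields $(\Lambda\eta,\eta)=0 \Rightarrow \eta=0$ and hence both coercivity in the hard case and uniqueness; one must also check that the drift-perturbation terms $a(t,x)\triangledown_x\log h(t,x)\cdot\triangledown_x(\cdot)$ are handled consistently, i.e. that \eqref{Eq27}--\eqref{Eq28} and \eqref{Eq30} are compatible with \eqref{Eq20} and \eqref{Eq26}. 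By contrast, the integration-by-parts identities are routine, being essentially those already used in the proof of Proposition~\ref{P1}.
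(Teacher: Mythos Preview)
Your proposal is correct and follows essentially the same route as the paper's proof: the affine decomposition \eqref{Eq25}--\eqref{Eq29} reducing the constraint to a linear reachability condition, the pairing identity $(z(T;u_1),\xi)=\int\!\!\int_{(0,T)\times U_1} u_1\,\varphi(\xi)\,dx\,dt$ obtained by integration by parts against the adjoint system \eqref{Eq24}, and the dual minimization yielding the variational inequality \eqref{Eq33}. The only cosmetic difference is that the paper casts the duality step explicitly as Fenchel duality (writing the constraint as an indicator, computing $\bar{J}_1^{\ast}$ and $\bar{J}_2^{\ast}$, and invoking Rockafellar/Ekeland--T\'emam), whereas you phrase it in KKT/HUM language with the operator $\Lambda$; the resulting dual functional and its Euler inequality are identical.
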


\begin{proof}
Note that the optimization problem for the {\it leader} in \eqref{Eq9} is equivalent to
\begin{align*}
  &\inf_{u_1} \dfrac{1}{2}{\int\int}_{(0,\,T) \times U_1}   u_1^2 dx dt  \\
            & \quad \text{ s.t.} \quad  y(T; (u_1, \digamma(u_1))) \in y^{t_{g}} - y_0(T) + \alpha B,
\end{align*}
with respect to the linear decompositions in \eqref{Eq25}.
 
Next, introduce the following cost functionals
\begin{align}
\bar{J}_1(u_1)  = \dfrac{1}{2}{\int\int}_{(0,\,T) \times U_1} u_1^2 dx dt \label{Eq34}
\end{align}
and
\begin{align}
\bar{J}_2(u_1) = \biggl\{\begin{array}{l}
0 \quad\quad  \text{if} \quad \xi \in y^{t_{g}} - y_0(T) + \alpha B  \\
+\infty \quad \text{otherwise on} \,\, L^2(\Omega)
\end{array} \label{Eq35}
\end{align}
Let $\mathcal{H} \in \mathscr{L}(L^2((0,\,T) \times U_1); L^2(\Omega))$ be a bounded linear operator such that\footnote{$\mathscr{L}(L^2((0,\,T) \times U_1); L^2(\Omega))$ denotes a family of bounded linear operators.}
\begin{align}
\mathcal{H} u_1 = z(T; u_1). \label{Eq36}
\end{align}
Then, the optimization problem in \eqref{Eq9} is equivalent to
\begin{align}
  \inf_{u_1 \in L^2((0,\,T) \times U_1)} \biggl\{ \bar{J}_1(u_1)  + \bar{J}_2(u_1) \biggr\}. \label{Eq37}
\end{align}
Furthermore, using Fenchel's duality theorem (e.g., see \cite{Roc67} or \cite{EkeT76}), we have the following
\begin{align}
  \inf_{u_1 \in L^2((0,\,T) \times U_1)} \biggl\{ \bar{J}_1(u_1)  + \bar{J}_2(u_1) \biggr\} = - \inf_{\xi \in L^2(\Omega)} \biggl\{ \bar{J}_1^{\ast}(\mathcal{H}^{\ast} \xi)  + \bar{J}_2^{\ast}(-\xi) \biggr\}, \label{Eq38}
\end{align}
where $\mathcal{H}^{\ast}$ is the adjoint operator of $\mathcal{H}$ and the conjugate functions $\bar{J}_i^{\ast}$ are  given by
\begin{align}
  \bar{J}_i^{\ast} (\varphi) = \sup_{\hat{\varphi}} \biggl\{ (\varphi, \hat{\varphi}) - \bar{J}_i(\hat{\varphi}) \biggr\}, \quad i = 1, 2. \label{Eq39}
\end{align}
Note that if we multiply the first equation (respectively, the second one) in \eqref{Eq32} by $z$ (respectively, by $q$) and integrate by parts, then we obtain the following
\begin{align}
 (z(T), \xi) = {\int\int}_{(0,\,T) \times U_1} \varphi u_1^{\ast} dx dt.  \label{Eq40}
\end{align}
Then, for $\xi \in L^2(\Omega)$ that satisfies \eqref{Eq30}, we have the following 
\begin{align}
  \mathcal{H}^{\ast} \xi = \varphi \chi_{U_1}, \label{Eq41}
\end{align}
where $\varphi$ is from the unique solutions of \eqref{Eq32}. 

Note that
\begin{align}
  \bar{J}_1^{\ast} (u_1^{\ast}) = J_1(u_1^{\ast}) \label{Eq42}
\end{align}
and
\begin{align}
  \bar{J}_2^{\ast} (\xi) = (\xi, y^{t_{g}} - y_0(T)) + \alpha \Vert \xi \Vert_{L^2(\Omega)}. \label{Eq43}
\end{align}
Then, the optimization problem in \eqref{Eq9} is equivalent to 
\begin{align}
 & \inf_{\xi} \dfrac{1}{2}{\int\int}_{(0,\,T) \times U_1} \varphi^2 dx dt + \alpha \Vert \xi \Vert_{L^2(\Omega)} - (\xi, y^{t_{g}} - y_0(T)) \notag \\
  & \quad \text{ s.t.} \quad  y(T; (u_1, \digamma(u_1))) \in y^{t_{g}} - y_0(T) + \alpha B. \label{Eq44}
\end{align}
Let $\xi \in L^2(\Omega)$ be a unique solution to the following variational inequality
\begin{align}
{\int\int}_{(0,\,T) \times U_1} \varphi(\hat{\varphi} - \varphi) dx dt  + \bigl(y(T; \xi) -y^{t_g},\, \hat{\xi} - \xi\bigr) +& \alpha \bigl(\Vert \hat{\xi}\Vert_{L^2(\Omega)} - \Vert \xi \Vert_{L^2(\Omega)} \bigr) \ge 0, \notag \\
&\quad \quad  \forall \hat{\xi} \in L^2(\Omega). \label{Eq45}
\end{align}
Moreover, if we multiply the first equation (respectively, the second one) in \eqref{Eq24} by $(\hat{\varphi} - \varphi)$ (respectively, by $(\hat{\vartheta} - \vartheta))$ and integrate by parts, we obtain the following
\begin{align}
 {\int\int}_{(0,\,T) \times U_1} \varphi(\hat{\varphi} - \varphi) dx dt = \bigl(z(T), \hat{\xi} - f\bigr). \label{Eq46}
\end{align}
Thus, if we substitute \eqref{Eq46} into \eqref{Eq45}, then we obtain \eqref{Eq33}. This completes the proof of Proposition~\ref{P2}.
\end{proof}

\begin{remark} \label{R4}
Note that the hierarchical control problem in Proposition~\ref{P2} requires the {\it follower} to respond optimally to the strategy of the {\it leader}, where such a correspondence is implicitly embedded in \eqref{Eq32}. Such a hierarchical framework takes, at the optimization stage, into account the issue of model uncertainty in terms of constraints on the entire path of perturbed solutions, as both the {\it leader} and the {\it follower} choose their strategies based on different models about the system.
\end{remark}

\subsection{On the controllability-type problem with model uncertainty} \label{S3.3}
In the following, we further consider the controllability-type problem in \eqref{Eq15}, where we provide a condition under which $y(T; (u_1^{\ast}, \digamma(u_1^{\ast})))$ spans a dense subset of $L^2(\Omega)$, when $u_1^{\ast}$ spans the subspace $L^2((0,\,T) \times U_1)$.

\begin{proposition} \label{P3}
Suppose that Proposition~\ref{P2} holds, then, for every $y^{t_g} \in L^2(\Omega)$ and $\alpha > 0$ (which is arbitrary small), there exits $u_1^{\ast} \in L^2((0,\,T) \times U_1)$ such that
\begin{align}
y(T; (u_1^{\ast}, \digamma(u_1^{\ast}))) \in y^{t_{g}} + \alpha B. \label{Eq47}
\end{align}
\end{proposition}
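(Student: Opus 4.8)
The plan is to recast Proposition~\ref{P3} as an approximate-controllability (density) statement for the reachable set and then to establish that density by a Hahn--Banach duality argument combined with a unique continuation property for the adjoint system \eqref{Eq24} (equivalently \eqref{Eq32}). First I would use the linear decomposition $y = y_0 + z$ of \eqref{Eq25}, in which $y_0(T)$ does not depend on $u_1$. Hence \eqref{Eq47} holds for some $u_1^{\ast}$ as soon as the affine-translated reachable set
\[
  \mathcal{R}_T = \bigl\{ z(T; u_1) : u_1 \in L^2((0,\,T) \times U_1)\bigr\} \subset L^2(\Omega)
\]
is dense in $L^2(\Omega)$: then $y^{t_g} - y_0(T)$ can be approximated to within $\alpha$ by some $z(T; u_1^{\ast})$, and adding back $y_0(T)$ yields \eqref{Eq47}. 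By linearity and boundedness of the solution map $u_1 \mapsto z(T; u_1)$ for the coupled system \eqref{Eq29}, $\mathcal{R}_T$ is a linear subspace, so by Hahn--Banach its density is equivalent to the implication: if $\xi \in L^2(\Omega)$ satisfies $(z(T; u_1), \xi) = 0$ for every $u_1 \in L^2((0,\,T) \times U_1)$, then $\xi = 0$.

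Next I would exploit the adjoint identity already recorded in the proof of Proposition~\ref{P2}. Multiplying the equations of \eqref{Eq24} (with terminal datum $\varphi(T) = \xi$) by $z$ and by $q$ and integrating by parts, exactly as in \eqref{Eq40}, gives $(z(T; u_1), \xi) = {\int\int}_{(0,\,T) \times U_1} \varphi\, u_1\, dx\, dt$ for all $u_1$; note that $\varphi, \vartheta$ in \eqref{Eq24} do not depend on $u_1$. Therefore the orthogonality hypothesis forces $\varphi \equiv 0$ on $(0,\,T) \times U_1$. Since $U_1$ is a nonempty open subdomain with $C^2$ boundary, and the coefficients of $\mathcal{L}_{t,x}$ and $\mathcal{L}_{t,x}^{h^{\ast}}$ are regular enough (here $a$, $\mu$ are Lipschitz and $\triangledown_x \log h$ is continuous because $h \in C_b^{1,2}$), a unique continuation property for the coupled parabolic system \eqref{Eq24} applies: on the subcylinder $(0,\,T) \times U_1$ all space-time derivatives of $\varphi$ vanish and $\chi_{U_2} = 0$ there since $U_1 \cap U_2 = \varnothing$, so $\vartheta \equiv 0$ on $(0,\,T) \times U_1$ as well, and this propagates to $\varphi \equiv \vartheta \equiv 0$ on all of $(0,\,T) \times \Omega$. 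In particular $\xi = \varphi(T, \cdot) = 0$, which establishes the density of $\mathcal{R}_T$ and hence \eqref{Eq47}; the existence of $u_1^{\ast}$ is moreover consistent with Proposition~\ref{P2}, where it is produced through the variational inequality \eqref{Eq33}.

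I expect the unique continuation step for the coupled system to be the main obstacle, since one cannot directly invoke a scalar Mizohata/Saut--Scheurer type result because $\varphi$ and $\vartheta$ are intertwined through the zero-order coupling terms in \eqref{Eq24}. The cleanest route is to treat $(\varphi, \vartheta)$ as the solution of a $2 \times 2$ parabolic cascade system with bounded zero-order coupling and apply a Carleman estimate valid for such systems (in the spirit of Fursikov--Imanuvilov), using the fact that on $(0,\,T) \times U_1$ the coupling term $\tfrac{1}{\beta}\varphi \chi_{U_2}$ vanishes, which decouples the local analysis; alternatively one differentiates the first equation in \eqref{Eq24} in $t$ and substitutes to obtain a single scalar equation for $\varphi$ (fourth order in $x$, second order in $t$) to which a known unique continuation principle applies. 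In either case the stated regularity of the coefficients is exactly what the argument requires, and the remaining estimates (well-posedness and continuous dependence of \eqref{Eq24}, \eqref{Eq29}, boundedness of $\mathcal{H}$ in \eqref{Eq36}) are routine energy estimates for linear parabolic systems.
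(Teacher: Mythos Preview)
Your proposal is correct and follows the same overall strategy as the paper: reduce \eqref{Eq47} to density of the reachable set via Hahn--Banach, use the adjoint identity \eqref{Eq40} to conclude $\varphi \equiv 0$ on $(0,T)\times U_1$, and then invoke unique continuation for the coupled adjoint system \eqref{Eq24} to force $\xi = 0$. The only genuine difference is in how the unique continuation step is carried out. The paper proceeds in two stages: first it uses the scalar Mizohata/Saut--Scheurer theorem to propagate $\vartheta \equiv 0$ through the region where $\chi_{U_2}=0$ (so that $\varphi,\vartheta$ and their normal derivatives vanish on $\partial U_2$); then, restricted to $(0,T)\times U_2$, it composes the two equations of \eqref{Eq24} into the single fourth-order ``quasi-elliptic'' equation \eqref{Eq51} for $\varphi$ and appeals to spatial analyticity of solutions together with the zero Cauchy data on $\partial U_2$ to conclude $\varphi\equiv 0$ there, hence $\xi\chi_{U_2}=0$. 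This is exactly your second suggested route (eliminate $\vartheta$ to obtain a higher-order scalar equation for $\varphi$), executed via an analyticity argument rather than a direct unique-continuation principle. Your first route, a Carleman estimate for the $2\times 2$ cascade in the spirit of Fursikov--Imanuvilov, would treat the whole domain at once and is more robust under lower-order perturbations, at the price of heavier machinery; the paper's two-stage argument is lighter but leans on the specific geometry $U_1\cap U_2=\varnothing$ and the smoothness of $\partial U_2$.
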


\begin{proof}
From Proposition~\ref{P2}, suppose that the PDE in \eqref{Eq32} admits unique solutions (i.e., $y(t,x; \xi)$, $p(t,x; \xi)$, $\varphi(t,x; \xi)$ and $\vartheta(t,x; \xi)$ for $(t, x)\in (0,\,T) \times \Omega$ that depend uniformly on $\xi \in L^2(\Omega)$). Then, noting \eqref{Eq30}, the condition in \eqref{Eq40} becomes
\begin{align}
 \varphi(t,x) = 0 \quad \text{for} \quad (t, x) \in (0,\,T) \times U_1,  \label{Eq48}
\end{align}
which implies the following conditions (see also \eqref{Eq24})
\begin{align*}
 \vartheta \chi_{U_1} &= 0, \\
\dfrac{\partial \vartheta}{\partial t} + \mathcal{L}_{t,x} \vartheta &= 0 \quad \text{in} \quad (0,\,T) \times \bigl(U \setminus U_2\bigr)
\end{align*}
and
\begin{align*}
-\dfrac{\partial \varphi}{\partial t} + \mathcal{L}_{t,x}^{\ast} \varphi = 0 \quad \text{in} \quad (0,\,T) \times \bigl(U \setminus U_2\bigr).
\end{align*}
Furthermore, using Mizohata's uniqueness theorem (e.g., see \cite{SauSc87}) together with the regularity conditions on $\mu(t, x)$ and $a(t, x)$, then we obtain the following
\begin{align}
 \vartheta (t,x) = 0  \quad \text{for} \quad (t, x) \in (0,\,T) \times \bigl(U \setminus U_2\bigr), \label{Eq49}
\end{align}
which requires $\xi$ to have a zero value outside of $U_2$ (cf. \eqref{Eq41} and \eqref{Eq30}). 

Next, consider the restriction of $\varphi$ and $\vartheta$ to $(0,\,T) \times U_2$ such that
\begin{eqnarray}
\left.\begin{array}{l}
-\dfrac{\partial \varphi}{\partial t} + \mathcal{L}_{t,x}^{h^\ast} \varphi = \vartheta \quad \text{in} \quad (0, T) \times U_2  \\
\dfrac{\partial \vartheta}{\partial t} + \mathcal{L}_{t,x} \vartheta = - \dfrac{1}{\beta} \varphi \chi_{U_2} \quad \text{in} \quad (0, T) \times U_2  \\
 \varphi, \,\,  \dfrac{\partial \varphi}{\partial x^i}, \,\, \vartheta, \,\, \dfrac{\partial \vartheta}{\partial x^i} = 0, \quad \text{for} \quad (t, x) \in (0, T) \times \partial \, U_2 \\
 \varphi(T, x) = \xi \chi_{U_1}, \quad \vartheta(0, x) = 0  \quad \text{on} \quad U_2 \\
\end{array}\right\}. \label{Eq50}
\end{eqnarray}
Then, from \eqref{Eq50}, it remains to show that $\xi \chi_{U_2} = 0$, which is a sufficient condition for $y(T; (u_1^{\ast}, \digamma(u_1^{\ast})))$ to span a dense subset of $L^2(\Omega)$, when $u_1^{\ast}$ spans the subspace $L^2((0,\,T) \times U_1)$. 

Noting the relations in \eqref{Eq27} and \eqref{Eq28} (together with \eqref{Eq26} and \eqref{Eq29}), the first two equations in \eqref{Eq50} imply the following
\begin{align}
\left(\dfrac{\partial}{\partial t} + \mathcal{L}_{t,x}^{h} \right) \left(-\dfrac{\partial}{\partial t} + \mathcal{L}_{t,x}^{h^{\ast}} \right) \varphi(t,x) + \dfrac{1}{\beta} \varphi(t, x) = 0 \quad \text{in} \quad (0, T) \times U_2, \label{Eq51}
\end{align}
which is a quasi-elliptic equation; and in view of Cauchy problems on bounded domains (e.g., see \cite[Theorem~6.6.1]{StrVa79}), for any fixed $t \in (0, T)$, $\varphi(t, x)$ is analytic in $U_2$, with Cauchy data zero on smooth boundary $\partial \,U_2$ of class $C^2$. As a result of this, $\varphi(t, x) = 0$ on $(0, T) \times \partial \,U_2$ and also continuous in $t$, then we have $\varphi(0, x) = 0$ and $\varphi(t, x)=\vartheta(t, x)=0$ for $(t, x) \in (0, T) \times \partial \, U_2$, which implies  $\xi \chi_{U_2} = 0$ (cf. the PDE in \eqref{Eq50}, since $\varphi(T, x) = \xi \chi_{U_1}$). This completes the proof of Proposition~\ref{P3}.
\end{proof}

\end{document}